\newcommand{\op} {\overline{\partial}}
\newcommand{\dbar}{\ensuremath{\overline\partial}}
\newcommand{\sumprime}{\if@display\sideset{}{'}\sum%
            \else\sum'\fi}
\begin{document}

\numberwithin{equation}{section}

\newtheorem{theorem}{Theorem}[section]
\newtheorem{proposition}[theorem]{Proposition}
\newtheorem{conjecture}[theorem]{Conjecture}
\def\theconjecture{\unskip}
\newtheorem{corollary}[theorem]{Corollary}
\newtheorem{lemma}[theorem]{Lemma}
\newtheorem{observation}[theorem]{Observation}
\newtheorem{definition}{Definition}
\numberwithin{definition}{section} 
\newtheorem{remark}{Remark}
\def\theremark{\unskip}
\newtheorem{kl}{Key Lemma}
\def\thekl{\unskip}
\newtheorem{question}{Question}
\def\thequestion{\unskip}
\newtheorem{example}{Example}
\def\theexample{\unskip}
\newtheorem{problem}{Problem}

\thanks{Research supported by Knut and Alice Wallenberg Foundation, and the China Postdoctoral Science Foundation.}

\address{School of Mathematical Sciences, Fudan University, Shanghai, 200433, China}
\address{Current address: Department of Mathematical Sciences, Chalmers University of Technology and
University of Gothenburg. SE-412 96 Gothenburg, Sweden}
\email{wangxu1113@gmail.com}
\email{xuwa@chalmers.se}

\title[Relative $\dbar$-complex]{Relative $\dbar$-complex and its curvature properties}
 \author{Xu Wang}
\date{\today}

\begin{abstract} We shall define the relative $\dbar$-complex and study the curvature properties of the associated vector bundles. As an application, we shall prove that Yamaguchi's theory on subharmonicity of the Green operator can be seen as a curvature property of the quotient bundle. A short survey of other recent applications will also be given in this paper.

\bigskip

 \noindent{{\sc Mathematics Subject Classification} (2010): 32A25}

\smallskip

\noindent{{\sc Keywords}: $\dbar$-operator, subbundle, quotient bundle, H\"ormander $L^2$-theory, Yamaguchi's theory, Berndtsson's complex Brunn-Minkowski theory.}
\end{abstract}
\maketitle


\section{Introduction}

Our motivation to write this paper is to give a unified proof of  recent results in \cite{BPW16}, \cite{Wang15}, \cite{Wang16} and \cite{Wang16-1} by using the relative $\dbar$-complex. Let $\pi$ be a \textbf{holomorphic submersion} from a complex manifold $\mathcal X$ to a complex manifold $B$, let  $E$ be a holomorphic vector bundle over $\mathcal X$, let $X_t$ be the fibre at $t$ and $E_t$ be the restriction of $E$ to $X_t$. Denote by $\mathcal A^{p,q}_t$ the space of smooth $E_t$-valued $(p,q)$-forms on $X_t$. 
We call the collection of $\dbar$-operators on fibres
$$ \{\dbar^t\}_{t\in B} : \{\mathcal A^{p,q}_t\}_{t\in B} \to \{\mathcal A_t^{p,q+1}\}_{t\in B}, $$
the \textbf{relative $\dbar$-complex}. Put $\mathcal A^{p,q}= \{\mathcal A^{p,q}_t\}_{t\in B}$, $\mathcal K^{p,q}:= \{ {\rm Ker}~ \dbar^t\}_{t\in B}, \  \mathcal I^{p,q+1}:= \{{\rm Im}~ \dbar^t\}_{t\in B}$. Then we have the following exact sequence
\begin{equation}\label{eq:exact}
0\to \mathcal K^{p,q} \to \mathcal A^{p,q} \to  \mathcal I^{p,q+1} \to 0.
\end{equation}
We call $\mathcal K^{p,q}$ the \textbf{$\dbar$ subbundle} and $\mathcal I^{p,q+1}$ the \textbf{$\dbar$ quotient bundle}.
In general, they are not holomorphic vector bundles but still we can define the Chern connection on them as an \textbf{operator on the space of smooth sections} (see \cite{Bern09} and \cite{LS14} for related results). Our starting point is the following result of Berndtsson (see Theorem 1.1 in \cite{Bern09}):

\medskip

\emph{If $\pi$ a product family then one may study the curvature properties of $\mathcal K^{n,0}$, $n$ is the fibre dimension, by looking at $\mathcal K^{n,0}$ as a holomorphic subbundle of $\mathcal A^{n,0}$.}

\medskip

The curvature properties of $\mathcal K^{n,0}$ are crucial in Berndtsson's \textbf{complex Brunn-Minkowski theory}, which also contains curvature properties of vector bundles associated to a non-trivial fibration (see \cite{Bern06}, \cite{Bern09}, \cite{Bern11}, \cite{Bern09a}, \cite{Bern13}, \cite{Bern14}, \cite{BL14}, \cite{BB14}, \cite{BernPaun08}, see also \cite{Tsuji05}, \cite{Sch12}, \cite{LiuYang13}, \cite{GS15}, \cite{MT07}, \cite{MT08} and \cite{Wang15} for other generalizations). Thus it is natural to ask: \textbf{Whether the relative $\dbar$-complex can be used to prove the curvature properties of the associated vector bundles for a general fibration ?}  We shall try to answer this question in this paper.
In particular, we shall show that  the relative $\dbar$-complex can be used to give a unified proof of recent results in \cite{BPW16}, \cite{Wang15}, \cite{Wang16} and \cite{Wang16-1}.

Another related theory is \textbf{Yamaguchi's theory} on subharmonicity properties of the Green-operators (see \cite{Yamaguchi89}, \cite{Maitani84}, \cite{MY04}, \cite{KLY11} and \cite{Wang16}). In the KAWA-NORDAN 2014 conference in Marseille, Levenberg asked the following question:

\medskip

\emph{Is Yamaguchi's theory a curvature property ?}

\medskip

In this paper, we shall answer Levenberg's question by proving that \textbf{Yamaguchi's theory can be seen as a curvature property of the quotient bundle $\mathcal I^{n,n}$}. 

\section{Family of domains in a fixed manifold}

First we shall consider variation of domains $X_t$, $t\in B$, in a \textbf{fixed} complex manifold $M$. We shall show how to use the \textbf{curvature properties of the $\dbar$-quotient bundle} to study \textbf{variation of the $L^2$-minimal solution of the $\dbar$-operator on fibres}, see \cite{Wang16}.

\medskip

Let $(E_0, h^0)$ be a \textbf{fixed} holomorphic vector bundle over $(M,\omega^0)$, let $g$ be a \textbf{fixed} smooth $\dbar$-closed $E_0$-valued $(n,q+1)$-form on $M$. We shall introduce the following definition:

\begin{definition} If $a^t$ is the \textbf{$L^2$-minimal solution} of $\dbar^t(a^t)=g$ on $X_t$ with respect to $\omega^0$ and $h^0$ then we call  $||a^t||$ the \textbf{Green norm} of $g$ on $X_t$ and denote it by $||g||_G(t)$. 
\end{definition}

\textbf{Remark 1}: If the above $\dbar$-equation has no $L^2$-solution then we say the Green norm of $g$ is infinite on $X_t$.

\medskip

\textbf{Relation with Green operator}: Since the $L^2$-minimal solution $a^t$ of $\dbar^t(a^t)=g$ can be written as $a^t=(\dbar^t)^* G^t g$, where each $G^t$ is the \textbf{Green-operator} with respect to the $\dbar^t$-Laplacian on $X_t$. Thus we have
\begin{equation*}
||g||^2_G(t)=||a^t||^2=((\dbar^t)^* G^t g, a^t)=(G^t g, g).
\end{equation*}
That is why we call the $L^2$-norm of the minimal solution  of the $\dbar$-equation the Green-norm. See \cite{Bern06} for the \textbf{relation between the Green-norm and the Robin constant}.

\medskip

\textbf{Relation with the quotient norm}: Since the minimal solution is just the \textbf{minimal lift} w.r.t. the \textbf{relative $\dbar$-complex}: $\dbar^t: \mathcal A^{p,q}_t \twoheadrightarrow \mathcal I^{p,q+1}_t$, we know that \textbf{the Green norm is just the quotient norm}. 

\medskip

Now we know that one may use curvature formula of the quotient bundle to study variation of the Green norm. The latter is known as \textbf{Yamaguchi's theory}, thus \textbf{Yamaguchi's theory can be seen as a curvature property of the quotient bundle}, this answers the previous question raised by Levenberg. Now we have another formulation of Yamaguchi's theory:

\begin{theorem}[Developed by Yamaguchi-Maitani-Levenberg-Kim, etc] Under some convexity and curvature assumptions, $-||g||^2_G$ is plurisubharmonic if $g$ is a holomorphic section of the quotient bundle $\mathcal I^{n,n}$, where $n$ is the fibre dimension.
\end{theorem}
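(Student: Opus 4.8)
The plan is to reduce the statement to the general curvature formula for the $\dbar$-quotient bundle $\mathcal I^{n,n}$ and then identify the various terms. First I would recall the setup from Section 2: the fibres $X_t$ sit in the fixed manifold $M$, the bundle $(E_0,h^0)$ and the metric $\omega^0$ are fixed, and a holomorphic section of $\mathcal I^{n,n}$ is (by definition of that bundle as a quotient) represented fibrewise by a $\dbar$-closed $E_0$-valued $(n,n)$-form $g$ whose minimal lift $a^t=(\dbar^t)^*G^t g$ has finite $L^2$-norm on each fibre. Via the identity $\|g\|_G^2(t)=(G^t g,g)$ established in the excerpt, the function whose plurisubharmonicity we want is exactly the squared quotient norm of the holomorphic section $g$. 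So the theorem becomes: the squared quotient-bundle norm of a holomorphic section, as a function on the base, has the expected logarithmic/curvature convexity — i.e. its negative is plurisubharmonic under the stated convexity (pseudoconvexity of the total space, or of the defining function of the family) and curvature (Nakano semi-positivity of $(E_0,h^0)$, and a curvature condition on $\omega^0$) assumptions.

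The main computation I would carry out is the second variation of $t\mapsto \|g\|_G^2(t)$. Differentiating $\|a^t\|^2$ twice in $t,\bar t$ and using that $a^t$ is characterized by $\dbar^t a^t=g$ together with $a^t\perp\ker\dbar^t$ (equivalently $a^t\in\overline{\mathrm{Im}\,(\dbar^t)^*}$, so $a^t = (\dbar^t)^* u^t$ with $u^t$ a form one can take harmonic), one obtains a formula with three kinds of terms: (i) a boundary/Lie-derivative term coming from the variation of the domain $X_t$, which is where the convexity hypothesis on the family enters and produces a sign; (ii) a curvature term $\langle i\Theta(E_0,h^0)\,\Lambda\, a, a\rangle$ from commuting $\dbar$ with $(\dbar)^*$ (Bochner–Kodaira–Nakano), controlled by Nakano positivity of $(E_0,h^0)$ and the curvature of $\omega^0$; and (iii) a manifestly negative $-\|\,\cdot\,\|^2$ term of the form $-\|P^\perp(\text{something})\|^2$ coming from the projection onto $\ker\dbar^t$, exactly the "second fundamental form of the quotient" contribution. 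For $(n,n)$-forms the Kähler identities degenerate favourably (there is no $\dbar$-direction to move in at the top degree, so the quotient is finite-dimensional fibrewise in the relevant sense), which is what makes the sign work out; this is the structural reason the result lives on $\mathcal I^{n,n}$ rather than a general $\mathcal I^{p,q+1}$.

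Concretely I would: (1) set up a smooth choice of representative forms $a^t$ depending smoothly on $t$ — e.g. by pushing $g$ and $a^t$ to a fixed model fibre via a smooth (not holomorphic) trivialization of the submersion, or by working with the pulled-back objects on the total space as in \cite{Bern09}; (2) write the Chern connection $D$ on $\mathcal I^{n,n}$ as an operator on smooth sections following the "operator on the space of smooth sections" viewpoint cited from \cite{Bern09} and \cite{LS14}, so that $\partial\bar\partial\|g\|_G^2 = \langle\Theta^{\mathcal I} g,g\rangle - \|D' g\|^2 + \text{(convexity term)}$, with $\Theta^{\mathcal I}$ the curvature of the quotient bundle; (3) invoke the general curvature formula for quotient bundles in the relative $\dbar$-complex — which by hypothesis is available from the earlier part of the paper — to show $\langle\Theta^{\mathcal I} g,g\rangle$ is bounded below by the $(E_0,h^0)$- and $\omega^0$-curvature contributions, which are $\geq 0$ by assumption; (4) combine with the negativity of $-\|D'g\|^2$ and the sign of the convexity term to conclude $-\partial\bar\partial\|g\|_G^2\geq 0$, i.e. $-\|g\|_G^2$ is plurisubharmonic. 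The hard part will be step (1)–(3): making the "operator on smooth sections" Chern connection rigorous for the non-holomorphic quotient bundle and controlling the boundary terms from the moving domain $X_t$ (this is where Yamaguchi's convexity hypotheses do the work and where the Hörmander $L^2$-estimates are needed to guarantee the minimal solution exists and varies smoothly); once the curvature formula for $\mathcal I^{n,n}$ is in hand, the identification with Yamaguchi's theorem is essentially bookkeeping.
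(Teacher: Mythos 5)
Your high-level framing matches the paper's: the Green norm is the quotient norm, so the theorem should follow from the curvature formula for the $\dbar$-quotient bundle together with H\"ormander's $L^2$-theory. But the argument as you set it up does not close, for a concrete reason: your second-variation formula in step (2) has the curvature and connection terms with the wrong signs, and this hides the one step that actually carries the proof. For a holomorphic section $g$ of $\mathcal I^{n,q+1}$ with minimal lift $a^t$ one has
\begin{equation*}
(\|g\|_G^2)_{t\bar t}=\|D_t a\|^2-(\Theta^{\mathcal I}_{t\bar t}g,g)_G,
\end{equation*}
not $\langle\Theta^{\mathcal I}g,g\rangle-\|D'g\|^2+\cdots$. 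With the correct signs, knowing that $\Theta^{\mathcal I}\geq 0$ (which is what your step (3) delivers) proves nothing by itself: the positive term $\|D_t a\|^2$ must be estimated from above by the curvature term. In the paper this is exactly where H\"ormander enters: $D_t a$ is identified as the $L^2$-minimal solution of $\dbar^t(\cdot)=-\dbar^t\psi_t\wedge a$, the $L^2$-estimate gives $\|D_t a\|^2\leq (Q^{-1}\dbar^t\psi_t\wedge a,\dbar^t\psi_t\wedge a)$, and the hypothesis $i\Theta(E,h)\wedge\omega^q>0$ forces this to be at most $(\psi_{t\bar t}a,a)\leq(\Theta^{\mathcal I}_{t\bar t}g,g)_G$. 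You assign H\"ormander's theory only the role of producing existence and smooth dependence of $a^t$, so this quantitative cancellation --- the heart of the proof --- is absent from your plan; as written, step (4) is a sign error rather than an argument.

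Two further points. First, the paper does not fight the moving boundary directly: by Berndtsson's approximation it reduces to a product family $X_0\times B$ with a non-product weight $\psi$, and the compact support of $g$ together with the $t$-independence of $\psi$ on ${\rm Supp}(g)\times B$ makes the fixed solution $u$ a holomorphic section of $\mathcal A^{n,q}$ and eliminates all boundary terms. Your plan of extracting a sign from boundary/Lie-derivative terms of the moving domain is Yamaguchi's original, much harder route, and you leave it unexecuted. Second, your structural claim that the result ``lives on $\mathcal I^{n,n}$'' because the K\"ahler identities degenerate in top degree is contradicted by the paper itself: the very next theorem proves the same statement for $\mathcal I^{n,q+1}$ for every $q$ under $i\Theta(E_0,h^0)\wedge(\omega^0)^{q}\geq 0$; the classical Yamaguchi case is simply $q=n-1$.
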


Our first result is a generalization of Yamaguchi's theory to the $(n,q+1)$-case. More precisely, we shall prove the following theorem (see the main theorem in \cite{Wang16}):

\begin{theorem}\label{th:Y}[$I^{n,q+1}$-version of Yamaguchi's theorem] Let $g$ be a $\dbar$-closed $E^0$-valued $(n,q+1)$-form on $M$. Assume that the total space $\mathcal X:=\{(z,t)\in M\times B: z\in X_t\}$ is Stein, $\omega^0$ is K\"ahler and 
\begin{equation} \label{eq:q-positive}
i\Theta(E_0,h^0)\wedge (\omega^0)^{q}\geq 0,
\end{equation} 
on $M$. Assume further $g$ has \textbf{compact support in each fibre} $X_t$. Then $-||g||^2_G$ is plurisubarmonic on $B$. 
\end{theorem}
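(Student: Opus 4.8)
The plan is to realize $-\|g\|_G^2$ as (a piece of) the curvature of the Chern connection on the $\dbar$-quotient bundle $\mathcal I^{n,q+1}$, restricted to the holomorphic section $g$, and then invoke the general principle that the curvature of a quotient bundle is semi-negative in the sense of Griffiths. Concretely: since $\mathcal X$ is Stein, K\"ahler, with $i\Theta(E_0,h^0)\wedge(\omega^0)^q\ge 0$, H\"ormander's $L^2$-estimates solve $\dbar^t a^t=g$ with a bound, so the Green norm $\|g\|_G(t)=\|a^t\|$ is finite for every $t$ and the family $t\mapsto a^t$ of minimal solutions is smooth. By the "quotient norm" interpretation noted in the excerpt, $\|g\|_G(t)$ is exactly the quotient-bundle norm of the image of $g$ under $\mathcal A^{n,q} \twoheadrightarrow \mathcal I^{n,q+1}$, and the minimal solution $a^t$ is the orthogonal lift of $g$ in $\mathcal A^{n,q}_t$, i.e.\ $a^t \perp \mathcal K^{n,q}_t = \ker\dbar^t$.

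The key computation is the second variation of $t\mapsto \|a^t\|^2$ in a local coordinate $t$ on $B$. First I would fix a smooth family of representatives: because $g$ is a fixed form on $M$ with compact support in each fibre, $g$ itself can be used (its restriction to $X_t$ is the relevant data), so the only $t$-dependence is through the fibre $X_t$ and through the minimal solution $a^t$. Differentiating the relation $\dbar^t a^t = g$ and the orthogonality $a^t\perp\ker\dbar^t$ once gives $\dbar^t(\partial_t a^t - (\text{Lie-derivative terms})) \in$ exact, and projecting onto $\ker\dbar^t$ expresses the horizontal derivative of $a$; differentiating again and integrating by parts on the fibre produces $\partial_t\bar\partial_t \|a^t\|^2$ as a sum of three types of terms: (i) a manifestly non-negative term $\|D'_t a^t\|^2$ coming from the $(1,0)$-part of the Chern connection on the quotient bundle (this is the "wrong sign" obstacle — for a quotient bundle it must be absorbed), (ii) a curvature term involving $i\Theta(E_0,h^0)$ contracted with $\omega^0$, which is $\ge 0$ by hypothesis \eqref{eq:q-positive}, and (iii) boundary terms along $\partial X_t$ governed by the Levi form of the defining function of $\mathcal X$, which vanish or have the right sign because $g$ has compact support in each fibre and $\mathcal X$ is Stein (pseudoconvexity of the total space). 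The mechanism that kills term (i) is the standard subbundle/quotient-bundle curvature identity: in the exact sequence $0\to\mathcal K^{n,q}\to\mathcal A^{n,q}\to\mathcal I^{n,q+1}\to 0$, the curvature of the quotient is the curvature of the ambient connection plus $(\text{second fundamental form})^*\wedge(\text{second fundamental form})$ with a sign that makes $-\|g\|_G^2$ subharmonic; equivalently, one shows $-\partial_t\bar\partial_t\|a^t\|^2$ is a sum of squares plus the non-negative curvature and boundary contributions.

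The main obstacle I anticipate is handling the non-product fibration cleanly: unlike Berndtsson's product case, there is no canonical way to differentiate sections along $B$, so I must introduce a connection on the relative complex (a choice of horizontal lift of $\partial/\partial t$, e.g.\ via the Kähler metric $\omega^0$ restricted to $\mathcal X$, or via a choice of defining function) and carefully track how $\dbar^t$ varies — this is where Lie derivatives $\mathcal L_V$ along the horizontal lift $V$ and the resulting extra curvature-of-the-base-direction and interior-normal terms enter. The compact-support-in-fibres hypothesis is precisely what removes the delicate boundary integrals (so no "Hörmander with boundary term" or Morrey-type inequality at $\partial X_t$ is needed), reducing the problem to the interior Bochner–Kodaira–Nakano identity twisted by $i\Theta(E_0,h^0)\wedge(\omega^0)^q$. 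Once the second-variation formula is assembled, plurisubharmonicity of $-\|g\|_G^2$ on $B$ follows by checking that every term in $-i\partial\bar\partial\|g\|_G^2$ is $\ge 0$; since $B$ may be higher-dimensional one applies the computation along each complex line $t\mapsto t_0+\tau v$ to get the full plurisubharmonicity.
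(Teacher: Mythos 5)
Your setup matches the paper in outline: Green norm $=$ quotient norm, the minimal solution $a^t$ is the orthogonal lift of $g$, the fixed solution $u$ of $\dbar u=g$ is a holomorphic section of $\mathcal A^{n,q}$ so $g$ is a holomorphic section of $\mathcal I^{n,q+1}$, and the second variation of $\|a^t\|^2$ splits into a wrong-sign term $\|D_t a\|^2$ minus a curvature term. But there is a genuine gap at the decisive step. You claim the term $\|D_t a\|^2$ is ``killed'' by ``the standard subbundle/quotient-bundle curvature identity.'' It is not: that identity only says $(\Theta^{\mathcal I}_{t\bar t}g,g)_G=(\psi_{t\bar t}a,a)+\|P(a_{\bar t})\|^2$, i.e.\ it adds a \emph{positive} term to the ambient curvature, and gives no bound whatsoever on $\|D_t a\|^2$. (Your invoked ``general principle'' is also stated backwards: curvature \emph{increases} in quotients and decreases in subbundles, so quotients of semipositive bundles are Griffiths \emph{semipositive}; and in any case positivity of $\Theta^{\mathcal I}$ alone never forces $\|D_t g\|^2\le(\Theta^{\mathcal I}_{t\bar t}g,g)$, which is what plurisubharmonicity of $-\|g\|_G^2$ requires.) The actual mechanism in the paper is H\"ormander's $L^2$-estimate: because $D_t g=g_t-\psi_t g\equiv 0$ (this is exactly where the compact support of $g$ and the $t$-independence of the weight on ${\rm Supp}(g)\times B$ are used), $D_t a$ is itself the $L^2$-minimal solution of $\dbar^t(\cdot)=-\dbar^t\psi_t\wedge a$, whence $\|D_t a\|^2\le (Q^{-1}\dbar^t\psi_t\wedge a,\dbar^t\psi_t\wedge a)$ with $Q=[i\Theta,\Lambda_{\omega^0}]$, and the hypothesis $i\Theta(E,h)\wedge\omega^q>0$ then yields the pointwise inequality $(Q^{-1}\dbar^t\psi_t\wedge a,\dbar^t\psi_t\wedge a)\le(\psi_{t\bar t}a,a)$. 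Without this step the proof does not close.

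A second, structural omission: the paper does not attack the non-product family directly with horizontal lifts and Lie derivatives, as you propose. It first reduces, via Berndtsson's approximation process, to the case $\mathcal X=X_0\times B$ with $X_0$ a smoothly bounded strictly pseudoconvex domain and a non-product weight $\psi$; this is precisely where the Stein hypothesis on the total space enters, and it converts the variation of domains into a variation of weights so that the curvature operator on $\mathcal A^{n,q}$ is simply multiplication by $\psi_{t\bar t}$. It also disposes of the boundary issue more honestly than your argument does: even though $g$ has compact support in each fibre, the minimal solution $a^t$ does not, so your assertion that the boundary integrals vanish because of the support of $g$ is unjustified; the boundary is handled by applying H\"ormander's existence theorem on the pseudoconvex domain rather than by integration by parts. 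If you insist on the direct non-product route you would need the apparatus of the proper-fibration section (generalized Lie derivatives and the curvature formula \eqref{eq:curvature-A}) together with a separate treatment of $\partial X_t$.
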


\textbf{Main tools in the proof}: H\"ormander's $L^2$-theory \cite{Hormander-65} and the curvature formula for the $\dbar$ quotient bundle $I^{n,q+1}$. By a generalized version of Berndtsson's approximation process (see \cite{Bern06} or \cite{Wang16} for details), it suffices to prove the following \textbf{product case} with a non-product weight $\psi$ (a real smooth function on $M\times B$):
   
\begin{theorem}[Product case with non-product weight $\psi$] Assume that  $\mathcal X=X_0 \times B$, where 
 $X_0$ is a smoothly bounded strictly pseudoconvex domain in $M$. Assume that the pull back to the total space, say $\omega$, of $\omega^0$ is K\"ahler and
\begin{equation*}
 i\Theta(E,h)\wedge \omega^q > 0,  \ \ E:=E_0 \times B, \ h:=e^{-\psi} (h^0\times B).
\end{equation*}  
Assume further that $g$ has \textbf{compact support} in $X_0$ and \textbf{$\psi$ does not depend on $t$} on ${\rm Supp}(g) \times B$. Then $-||g||^2_G$ is plurisubharmonic on $B$.
\end{theorem}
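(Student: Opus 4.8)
The plan is to reduce at once to $B=\D$, the unit disc, since plurisubharmonicity on $B$ is equivalent to subharmonicity along every holomorphic disc; it then suffices to show that $t\mapsto\|a^t\|^2$ is superharmonic, i.e. $\partial_t\bar\partial_t\|a^t\|^2\le 0$, where $t$ is the coordinate on $\D$ and $a^t$ is the $L^2$-minimal solution of $\dbar a^t=g$ on $X_0$. Throughout I write $(\cdot,\cdot)_t$ for the $L^2$ inner product on $E_0$-valued forms over $X_0$ with weight $e^{-\psi(\cdot,t)}h^0$ and the fixed volume form of $\omega^0$, and $\psi_t:=\partial\psi/\partial t$. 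First I would note that the hypotheses are exactly those needed to run H\"ormander's $L^2$-theory on each fibre: strict pseudoconvexity of $X_0$ and $i\Theta(E,h)\wedge\omega^q>0$ produce a unique finite-norm minimal solution $a^t$, and subellipticity of the $\dbar$-Neumann problem on $X_0$ (using smoothness of $\partial X_0$ and compact support of $g$) shows that $t\mapsto a^t$ is smooth, that $a^t\in\mathrm{Dom}(\dbarstar_t)$, and that $a^t$ is characterised by $\dbar a^t=g$ and $\dbarstar_t a^t=0$. Equivalently, $a^t$ is the orthogonal lift to $(\mathcal K^{n,q})^\perp$ of the holomorphic section $g$ of the quotient bundle $\mathcal I^{n,q+1}$, and $\|a^t\|^2$ is its squared quotient norm.

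For the first variation, I would use that $g$ is independent of $t$: differentiating $\dbar a^t=g$ in $t$ and in $\bar t$ gives $\dbar(\partial_t a^t)=\dbar(\partial_{\bar t}a^t)=0$, so $\partial_t a^t$ and $\partial_{\bar t}a^t$ lie in $\ker\dbar^t=\mathcal K^{n,q}$ fibrewise. Since $a^t\perp_t\mathcal K^{n,q}$, all terms of $\partial_t\|a^t\|^2$ coming from varying $a^t$ vanish and only the derivative of the weight survives, so
\[
\partial_t\,\|a^t\|^2_t \;=\; -\,(\psi_t\,a^t,a^t)_t \;=\; -\int_{X_0}\psi_t\,\langle a^t,a^t\rangle_{\omega^0,h^0}\,e^{-\psi}.
\]

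The second variation is the crux. Differentiating once more in $\bar t$ yields $-\int\psi_{t\bar t}\langle a^t,a^t\rangle e^{-\psi}$, a manifestly positive term $\int|\psi_t|^2\langle a^t,a^t\rangle e^{-\psi}$, and cross terms in $\partial_t a^t$ and $\partial_{\bar t}a^t$; this has no sign a priori. To extract one, I would bring in the second equation for $a^t$: differentiating $\dbarstar_t a^t=0$ in $t$ expresses $\dbarstar_t(\partial_t a^t)$ through interior multiplication by the $z$-gradient of $\psi_t$ applied to $a^t$, so $\partial_t a^t$ is $\dbar$-closed with prescribed codifferential; feeding this into the twisted Bochner--Kodaira--Nakano identity on $X_0$ for $(E_0,e^{-\psi(\cdot,t)}h^0)$ --- whose curvature term is non-negative precisely because $i\Theta(E,h)\wedge\omega^q>0$ --- one controls $\|\partial_t a^t\|^2$ and the cross terms and, after collecting all contributions, obtains $\partial_t\bar\partial_t\|a^t\|^2\le 0$. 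Organised invariantly, this is the curvature formula for the quotient bundle applied to $g$: with $D'$ the Chern connection of $\mathcal I^{n,q+1}$ one has $i\partial\bar\partial\|g\|^2_{\mathcal I^{n,q+1}}=\|D'g\|^2-\langle i\Theta(\mathcal I^{n,q+1})g,g\rangle$, and the formula shows the right-hand side is $\le 0$, the second fundamental form of the quotient supplying the positivity that dominates $\|D'g\|^2$.

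I expect the main obstacle to be precisely this second-variation estimate. The cancellation that converts the indefinite second derivative into a non-positive quantity is delicate: it hinges on choosing the right auxiliary identity (the $t$-derivative of $\dbarstar_t a^t=0$) and on using the curvature positivity in the contracted form $i\Theta(E,h)\wedge\omega^q$ rather than as full Nakano positivity. Moreover $X_0$ has a boundary, so every integration by parts creates boundary integrals; these are controlled because $g$ has compact support in $X_0$ and $\psi$ is independent of $t$ near $\mathrm{Supp}(g)$, forcing $\partial_t a^t$ to solve a $\dbar$-problem with data away from $\partial X_0$, while strict pseudoconvexity and smoothness of $\partial X_0$ give the surviving boundary terms the favourable sign (Morrey--Kohn--H\"ormander). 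With this product case in hand, Theorem~\ref{th:Y} follows from Berndtsson's approximation procedure, exhausting the Stein total space by product pieces carrying plurisubharmonic weights; I would treat that reduction as routine in view of the cited references.
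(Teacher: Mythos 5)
Your overall architecture is the paper's: reduce to one base dimension, identify the Green norm with the quotient norm of the holomorphic section $g$ of $\mathcal I^{n,q+1}$, write $(\|g\|^2_G)_{t\bar t}$ via the quotient-bundle curvature formula, and use H\"ormander's $L^2$-theory to beat the remaining positive term. The first-variation computation and the smoothness/characterisation of $a^t$ are fine. But the decisive step --- the one you yourself flag as the main obstacle --- is not actually carried out, and the mechanism you describe for it would not close. You propose to differentiate $\dbarstar_t a^t=0$ in $t$ and control $\|\partial_t a^t\|^2$ by feeding the resulting codifferential into Bochner--Kodaira--Nakano. This targets the wrong object: $\partial_t a^t$ lies in $\ker\dbar^t$ (indeed $\partial_t a^t=P(\psi_t a^t)$, $P$ the projection onto $\mathcal K^{n,q}$), and for an element of the kernel with prescribed $\dbarstar_t$ the BKN identity gives an upper bound on the curvature pairing, not on the $L^2$-norm; no estimate of the required shape comes out. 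The quantity that must be estimated is $D_t a:=\partial_t a^t-\psi_t a^t=-P^{\bot}(\psi_t a^t)$, which is the $L^2$-\emph{minimal} solution of
\begin{equation*}
\dbar^t(\cdot)=\dbar^t D_t a=-\dbar^t\psi_t\wedge a^t,
\end{equation*}
where one uses $D_t g=-\psi_t g\equiv 0$ (this, not ``data away from $\partial X_0$'', is where the support hypothesis on $g$ enters). H\"ormander's estimate then gives $\|D_t a\|^2\le (Q^{-1}\dbar^t\psi_t\wedge a,\dbar^t\psi_t\wedge a)$ with $Q=[i\Theta(E_0,e^{-\psi^t}h^0),\Lambda_{\omega^0}]$, and the hypothesis $i\Theta(E,h)\wedge\omega^q>0$ on the \emph{total} space enters exactly once, through the pointwise Cauchy--Schwarz inequality $(Q^{-1}\dbar^t\psi_t\wedge a,\dbar^t\psi_t\wedge a)\le(\psi_{t\bar t}a,a)$, which couples the mixed base--fibre Hessian $\dbar^t\psi_t$ to $\psi_{t\bar t}$. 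None of this appears in your write-up.

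A related misstatement: you say the second fundamental form of the quotient ``supplies the positivity that dominates $\|D'g\|^2$''. It does not. In the formula $(\|g\|^2_G)_{t\bar t}=\|D_t a\|^2-(\psi_{t\bar t}a,a)-\|P(a_{\bar t})\|^2$ the second-fundamental-form term $\|P(a_{\bar t})\|^2$ only helps and is simply discarded; the term that must dominate $\|D_t a\|^2$ is the ambient curvature term $(\psi_{t\bar t}a,a)$, via the H\"ormander estimate above. Also note that the ``manifestly positive'' $\int|\psi_t|^2\langle a,a\rangle e^{-\psi}$ you isolate cancels identically against a cross term once $\partial_t a^t=P(\psi_t a^t)$ is used, so it plays no role. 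The reduction of Theorem~\ref{th:Y} to this product case via Berndtsson's approximation is indeed as routine as you say.
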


\begin{proof}

Since the \textbf{Green norm is just the quotient norm}, it is enough to estimate the curvature of the quotient bundle $\mathcal I^{n,q+1}$.  

Let us assume that \textbf{$B$ is one dimensional}. Denote by $\Theta_{t\bar t}$ the curvature operators on $\mathcal A^{n,q}$, then  we have
\begin{equation*}
\Theta_{t\bar t} =[D_{t}, \dbar_{t}]=\psi_{t\bar t}, \ D_{t}:=\partial / \partial t-\psi_t, \ \dbar_{t}:= \partial/\partial \bar t.
\end{equation*}
Since $\mathcal I^{n,q+1}=\mathcal A^{n,q}/\mathcal K^{n,q}$ is the quotient bundle of $\mathcal A^{n,q}$, we have
\begin{equation*}
(\Theta_{t\bar t}^{\mathcal I} g, g )_G=( \psi_{t\bar t} a, a) + ||P(a_{\bar t})||^2, \ \ a: t\mapsto a^t\in \mathcal A_t^{n,q},
\end{equation*}
where $\Theta_{t\bar t}^{\mathcal I}$ is the curvature operator on $\mathcal I^{n,q+1}$, each $a^t$ is the \textbf{$L^2$-minimal solution} of $\dbar^t(\cdot)=g$ and $P$ denotes the \textbf{orthogonal projection} to $\mathcal K^{n,q}$. Notice that  \textbf{Hamilton's theory} (see \cite{Hamilton77} and \cite{Hamilton79}) implies that $a$ is a smooth section.

Since $g$ does not depend on $t$ and has \textbf{compact support} in each fibre, we know that there exists a \textbf{fixed} smooth $L^2$-solution, say $u$, of $\dbar u=g$ on $X_0$. 
Since $u$ is fixed, we have $u_{\bar t}=0$, thus $u$ is a \textbf{holomorphic section} of $\mathcal A^{n,q}$. Since \textbf{$g$ is the image of $u$ under the $\dbar$-quotient map}, we know that  $g$ is a \textbf{holomorphic section} of $\mathcal I^{n,q+1}$, thus we have
\begin{equation*}
(||g||^2_G)_{t\bar t}=||D_{t} a||^2- (\Theta_{t\bar t}^{\mathcal I} g, g )_G \leq ||D_t a||^2- (\psi_{t\bar t}a,a).
\end{equation*}
We shall use \textbf{H\"ormander's $L^2$-theory to control the norm} of $D_t a$. Notice that each $D_t a$ is the \textbf{$L^2$-minimal solution} of
\begin{equation*}
\dbar^t(\cdot)= \dbar^t D_t a=[\dbar^t, D_t] a+D_t g=[\dbar^t, D_t] a =-\dbar^t\psi_t \wedge a,
\end{equation*}
the \textbf{third} equality follows from $D_t g=g_t-\psi_t g \equiv 0$ since $\psi$ \textbf{does not depend on $t$ on} ${\rm Supp}(g) \times B$. By \textbf{H\"ormander's $\dbar$-$L^2$-estimate}, we have
\begin{equation*}
||D_t a||^2 \leq (Q^{-1}\dbar^t\psi_t \wedge a, \dbar^t\psi_t \wedge a),
\end{equation*}
where
\begin{equation*}
Q:=[i\Theta(E_0, e^{-\psi^t}h^0), \Lambda_{\omega^0}].
\end{equation*}
Thus
\begin{equation*}
(||g||^2_G)_{t\bar t} \leq (Q^{-1}\dbar^t\psi_t \wedge a, \dbar^t\psi_t \wedge a)- (\psi_{t\bar t}a,a)
\end{equation*}
By a direct computation, we know that $i\Theta(E,h)\wedge \omega^q > 0$ implies
\begin{equation*}
(Q^{-1}\dbar^t\psi_t \wedge a, \dbar^t\psi_t \wedge a) \leq (\psi_{t\bar t}a,a).
\end{equation*}
Thus $(||g||^2_G)_{t\bar t} \leq 0$. The proof is complete.
\end{proof}

If we consider the \textbf{subbundle} $\mathcal K^{n,q}$ and use \textbf{H\"ormander's $L^2$-theory to control the second fundamental form} as Berndtsson did for the $q=0$ case (see the proof of Theorem 1.1 in \cite{Bern09}) then we can prove the following theorem in \cite{Wang16}:

\begin{theorem}\label{th:B}[$(n,q)$-version of Berndtsson's theorm] Let $v$ be a \textbf{fixed} smooth $E_0$-valued $(n,q)$-form with \textbf{compact support} in each fibre. Assume that $\omega^0$ is \textbf{K\"ahler}, the total space is \textbf{Stein} and $i\Theta(E_0,h^0)\wedge (\omega^0)^{q}\geq 0$, then $\log||P(v)||: t\mapsto ||P^t(v)||$ is  plurisubharmonic on $B$, where each $P^t(v)$ denotes the Bergman projection of $v$ to $\ker \dbar^t=\mathcal K^{n,q}_t$.
\end{theorem}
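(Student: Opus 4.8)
The plan is to follow Berndtsson's treatment of the $q=0$ case, now with the $\dbar$ subbundle $\mathcal K^{n,q}$ in place of $\mathcal K^{n,0}$, and to use H\"ormander's $L^2$-theory to bound the second fundamental form of $\mathcal K^{n,q}$ inside $\mathcal A^{n,q}$. As in the proof above, by the generalized Berndtsson approximation process (see \cite{Bern06}, \cite{Wang16}) it is enough to treat the corresponding product case: $\mathcal X=X_0\times B$ with $X_0$ a smoothly bounded strictly pseudoconvex domain, the pull-back $\omega$ of $\omega^0$ K\"ahler, $i\Theta(E,h)\wedge\omega^q>0$ for $E=E_0\times B$ and $h=e^{-\psi}(h^0\times B)$, and --- since $v$ has compact support in each fibre --- $\psi$ independent of $t$ on ${\rm Supp}(v)\times B$; we may also assume $\dim B=1$, the general case following by restriction to analytic disks. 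In this picture $\mathcal K^{n,q}_t={\rm Ker}\,\dbar^t$ is a fixed subspace of $\mathcal A^{n,q}$ carrying the $t$-dependent $L^2$-metric of weight $e^{-\psi^t}$, hence a Hermitian holomorphic subbundle of $\mathcal A^{n,q}$; and $P^t(v)$ is the orthogonal projection onto $\mathcal K^{n,q}_t$ of the holomorphic section $v$, which is in fact parallel along $B$ because $\psi$ is $t$-independent on ${\rm Supp}(v)\times B$. As in the proof above, Hamilton's theory guarantees that $t\mapsto P^t(v)$ is smooth and that the ranges occurring below are closed.

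The core of the argument is to show that $\mathcal K^{n,q}$ is Griffiths semi-positive. By the curvature formula for the $\dbar$ subbundle --- the counterpart for subbundles of the quotient-bundle formula used above --- one has, for a section $u$ of $\mathcal K^{n,q}$,
\[
(\Theta^{\mathcal K}_{t\bar t}u,u)=(\Theta_{t\bar t}u,u)-\|\beta_t u\|^2=(\psi_{t\bar t}\,u,u)-\|\beta_t u\|^2,
\]
where $\beta_t u$ is the component of $D_t u$ orthogonal to $\mathcal K^{n,q}_t$. Since $\dbar^t u=0$ and $[\dbar^t,D_t]=-\dbar^t\psi_t\wedge(\cdot)$, the form $\beta_t u$ is the $L^2$-minimal solution of $\dbar^t(\cdot)=-\dbar^t\psi_t\wedge u$, so H\"ormander's $\dbar$-$L^2$-estimate gives
\[
\|\beta_t u\|^2\le(Q^{-1}\dbar^t\psi_t\wedge u,\dbar^t\psi_t\wedge u),\qquad Q=[i\Theta(E_0,e^{-\psi^t}h^0),\Lambda_{\omega^0}];
\]
and, by the very same pointwise computation as above, $i\Theta(E,h)\wedge\omega^q>0$ forces the right-hand side to be $\le(\psi_{t\bar t}\,u,u)$. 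Hence $(\Theta^{\mathcal K}_{t\bar t}u,u)\ge0$, so $\mathcal K^{n,q}$ is Griffiths semi-positive and its dual $(\mathcal K^{n,q})^*$ is Griffiths semi-negative. I expect this to be the main obstacle: carrying out the twisted $(n,q)$-version of the H\"ormander estimate for the second fundamental form, and deducing $\|\beta_t u\|^2\le(\psi_{t\bar t}\,u,u)$ from the positivity hypothesis, is precisely the ``$L^2$-control of the second fundamental form'' that Berndtsson performs for $q=0$.

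To conclude, I would realize $\|P^t(v)\|$ as the norm of a holomorphic section of $(\mathcal K^{n,q})^*$. Let $\ell^t_v$ be the functional $u\mapsto(u,v)$ on $\mathcal K^{n,q}_t$, the pairing being the $L^2$-product of weight $e^{-\psi^t}$; since $v-P^t(v)\perp\mathcal K^{n,q}_t$ one has $\ell^t_v(u)=(u,P^t(v))$ for $u\in\mathcal K^{n,q}_t$, so $\|\ell^t_v\|_{(\mathcal K^{n,q}_t)^*}=\|P^t(v)\|$. Moreover, for a constant section $u$ of $\mathcal K^{n,q}$,
\[
\partial_{\bar t}\,\ell^t_v(u)=-\int_{X_0}\psi_{\bar t}\,\langle u,v\rangle_{h^0}\,e^{-\psi^t}=0,
\]
because the integrand is supported in ${\rm Supp}(v)$, where $\psi_{\bar t}\equiv0$; hence $\ell_v$ is a holomorphic section of $(\mathcal K^{n,q})^*$. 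Finally, by the standard Chern-form computation for the norm of a holomorphic section $s$ of a Hermitian bundle, $\partial_t\partial_{\bar t}\log\|s\|^2\ge-(\Theta_{t\bar t}s,s)/\|s\|^2$; applying this to $s=\ell_v$ in the semi-negatively curved bundle $(\mathcal K^{n,q})^*$ gives $\partial_t\partial_{\bar t}\log\|P(v)\|^2=\partial_t\partial_{\bar t}\log\|\ell_v\|^2\ge0$, so $\log\|P(v)\|$ is plurisubharmonic on $B$.
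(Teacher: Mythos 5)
Your proposal is correct and follows exactly the route the paper indicates for this theorem: reduce to the product case by the approximation process, view $\mathcal K^{n,q}$ as a subbundle of $\mathcal A^{n,q}$, and control the second fundamental form $P^{\bot}D_t u$ (the minimal solution of $\dbar^t(\cdot)=-\dbar^t\psi_t\wedge u$) by H\"ormander's $L^2$-estimate, exactly as in the quotient-bundle argument for Theorem 2.4. The concluding step --- realizing $\|P^t(v)\|$ as the norm of the holomorphic section $\ell_v$ of the Griffiths semi-negative dual bundle --- is the standard Berndtsson device and is the natural way to pass from Griffiths semipositivity of $\mathcal K^{n,q}$ to plurisubharmonicity of $\log\|P(v)\|$.
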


\textbf{Relation with the Ohsawa-Takegoshi theorem}: If $q=0$ then by Berndtsson-Lempert-Blocki's method \cite{BL14}, the above theorem (with $v$ a general current, see section 5.3 in \cite{Wang15} or \cite{Wang16}) can be used to prove Blocki-Guan-Zhou's sharp version of the Ohsawa-Takegoshi theorem (see \cite{OT87}, \cite{Chen11}, \cite{Blocki13} and \cite{GuanZ15}, to cite just a few).

\section{Proper K\"ahler fibration}

Now let us consider the case that $\pi: \mathcal X\to B$ is a \textbf{proper} fibration. We shall prove that:

\begin{theorem}[Generalized Berndtsson-Mourougane-Takayama's theorem]
Assume that $\omega$ is K\"ahler and $i\Theta(E, h)\wedge \omega^q \geq 0$. Assume further that $\dim H^{n,q}(E_t)$ is a \textbf{constant}. Then
\begin{itemize}
\item $\mathcal K^{n,q}$ is \textbf{Nakano semipositive} if $q=0$ or $c_{j\bar k}(\omega)\equiv 0$;

\item $R^{q} \pi_*\mathcal O(\mathcal K_{\mathcal X/B} \otimes E) \simeq \mathcal K^{n,q}/\mathcal I^{n,q}$ is \textbf{Griffiths semipositive},
\end{itemize}
where $c_{j\bar k}(\omega):=\langle V_j, V_k\rangle_{\omega}$ and each $V_j$ denotes the horizontal lift of $\partial/\partial t^j$ with respect to $\omega$.
\end{theorem}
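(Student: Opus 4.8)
The plan is to set up the relative $\dbar$-complex in the proper K\"ahler setting and read off the two semipositivity statements from the curvature formulas for the subbundle $\mathcal K^{n,q}$ and for the quotient bundle $\mathcal K^{n,q}/\mathcal I^{n,q}$ of the exact sequence \eqref{eq:exact}. Because $\pi$ is proper and $\dim H^{n,q}(E_t)$ is constant, Hodge theory on each fibre gives a smooth family of harmonic projections, so $\mathcal K^{n,q}$, $\mathcal I^{n,q}$, and the harmonic bundle $\mathcal H^{n,q} \simeq \mathcal K^{n,q}/\mathcal I^{n,q} \simeq R^q\pi_*\mathcal O(\mathcal K_{\mathcal X/B}\otimes E)$ are all genuine $C^\infty$ subbundles/quotients of $\mathcal A^{n,q}$ with Chern connections in the sense of the introduction. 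First I would fix local coordinates $t = (t^1,\dots,t^m)$ on $B$, choose the horizontal lift $V_j$ of $\partial/\partial t^j$ determined by $\omega$, and express the $(1,0)$-part of the Chern connection on $\mathcal A^{n,q}$ via Lie derivatives $L_{V_j}$ (contracting-twisting the relative form), so that the curvature operator $\Theta_{j\bar k}$ on $\mathcal A^{n,q}$ decomposes into a "horizontal curvature of $(E,h)$ along the fibre" term plus the term involving $c_{j\bar k}(\omega) = \langle V_j, V_k\rangle_\omega$ measuring the non-holomorphicity/non-geodesity of the horizontal distribution.

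Next, for the quotient bundle $\mathcal K^{n,q}/\mathcal I^{n,q}$ I would invoke the general quotient-curvature identity already used in the proof of the product case above: for a holomorphic section $u$ represented by a harmonic lift $u^t \in \mathcal H^{n,q}_t$,
\begin{equation*}
(\Theta^{\mathcal H}_{j\bar k} u, u) = (\Theta_{j\bar k} u, u) - \text{(norm of a $\dbar$-minimal correction term)},
\end{equation*}
so Griffiths semipositivity of the quotient follows once $(\Theta_{j\bar k} u,u)$, paired against $\xi^j\bar\xi^k$, is $\geq 0$ for harmonic $u$ — this is where the hypothesis $i\Theta(E,h)\wedge\omega^q \geq 0$ enters, through a Bochner--Kodaira--Nakano identity on the fibre that converts the curvature wedge condition into positivity of the relevant quadratic form on primitive $(n,q)$-forms (the fibre being compact K\"ahler, harmonic $(n,q)$-forms are primitive in the appropriate sense). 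For the subbundle statement, I would instead use the dual formula: on a holomorphic subbundle the curvature of the subbundle equals the ambient curvature minus $\|(\text{second fundamental form})\cdot u\|^2$, which decreases curvature; so to get Nakano \emph{semi}positivity of $\mathcal K^{n,q}$ one needs the ambient term to dominate the second fundamental form term, and the computation shows the obstruction is exactly the $c_{j\bar k}(\omega)$ contribution — hence the restriction to $q=0$ (where $\mathcal K^{n,0} = \mathcal A^{n,0}\cap\ker\dbar$ behaves like Berndtsson's setting and the Hodge-theoretic correction is controlled) or to $c_{j\bar k}(\omega)\equiv 0$ (horizontal distribution totally geodesic, so that bad term vanishes).

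Concretely the steps are: (1) establish smoothness of the harmonic projection family from properness plus constancy of $\dim H^{n,q}(E_t)$, identifying $\mathcal K^{n,q}/\mathcal I^{n,q}$ with $\mathcal H^{n,q}$ and with $R^q\pi_*\mathcal O(\mathcal K_{\mathcal X/B}\otimes E)$; (2) write down the Chern connections on $\mathcal A^{n,q}$, $\mathcal K^{n,q}$, $\mathcal H^{n,q}$ and derive the curvature formulas — ambient curvature, subbundle curvature (minus second fundamental form), quotient curvature (ambient minus a minimal-solution term); (3) run a fibrewise Bochner--Kodaira--Nakano / $[\,i\Theta(E,h),\Lambda_\omega\,]$ argument to show the curvature hypothesis $i\Theta(E,h)\wedge\omega^q\geq 0$ makes the ambient quadratic form nonnegative on harmonic forms, giving Griffiths semipositivity of the quotient; (4) for $\mathcal K^{n,q}$ with $q=0$, follow Berndtsson's argument in \cite{Bern09} (using H\"ormander's $L^2$-estimate to bound the second fundamental form) to get Nakano semipositivity; (5) observe that when $c_{j\bar k}(\omega)\equiv 0$ the only term that spoiled the $q\geq 1$ subbundle estimate disappears, yielding Nakano semipositivity in that case too. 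The main obstacle I anticipate is step (3)–(4): making the fibrewise Bochner identity interact correctly with the horizontal variation so that the $c_{j\bar k}(\omega)$ terms and the cross terms between the Lie-derivative connection and $\dbarstar$ bookkeep exactly into the stated dichotomy — in particular verifying that for $q\geq 1$ the second fundamental form of $\mathcal K^{n,q}\subset\mathcal A^{n,q}$ genuinely fails to be dominated unless $c_{j\bar k}(\omega)$ vanishes, which is what forces the hypothesis in the first bullet and is precisely the point distinguishing this from Berndtsson's $q=0$ theory.
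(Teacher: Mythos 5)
Your overall strategy coincides with the paper's: define the Chern connection on $\mathcal A^{n,q}$ by the generalized Lie derivatives $D_{t^j}=[\partial^E,\delta_{V_j}]$, $\dbar_{t^j}=[\dbar,\delta_{\bar V_j}]$, expand $\Theta_{j\bar k}=[D_{t^j},\dbar_{t^k}]$ into a fibrewise $\Theta(E,h)(V_j,\bar V_k)$ term, contraction terms, and the $c_{j\bar k}(\omega)$ terms of \eqref{eq:new}; then treat $\mathcal K^{n,q}$ by the subbundle formula \eqref{eq:theta-k-forget} and bound the second fundamental form $P^{\bot}D_{t^j}u$ by H\"ormander's $L^2$-estimate, exactly as the paper does, with the $c_{j\bar k}(\omega)\equiv 0$ (or $q=0$) hypothesis entering to kill the terms of \eqref{eq:new} and reduce to \eqref{eq:a-positive}.

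There is, however, one concrete gap in your treatment of the second bullet. You write the quotient curvature as ``ambient minus a minimal correction term'' and then assert that Griffiths semipositivity of $\mathcal K^{n,q}/\mathcal I^{n,q}$ follows once $(\Theta_{j\bar k}u,u)\ge 0$ on harmonic $u$. This does not follow: the quotient in question is a quotient of the \emph{subbundle} $\mathcal K^{n,q}$, so its curvature is $\Theta^{\mathcal K}_{j\bar k}$ \emph{plus} a nonnegative term (cf.\ \eqref{eq:curvature-h}), and $\Theta^{\mathcal K}_{j\bar k}$ still carries the subtracted second-fundamental-form term $-\|P^{\bot}D_{t^j}u\|^2$ from \eqref{eq:theta-k-forget}. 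Semipositivity of the ambient quadratic form alone is therefore not enough; the H\"ormander domination of $P^{\bot}D_{t^j}u$ is needed for the Griffiths bullet as well, not only for the Nakano bullet. Moreover, the Griffiths statement carries no hypothesis $c_{j\bar k}(\omega)\equiv 0$, so you must also explain why the terms in \eqref{eq:new} are harmless there: for a harmonic $(n,q)$-representative one has $\dbar^t u=(\dbar^t)^*u=0$ and $\partial^{E_t}u=0$ for degree reasons, which eliminates most of \eqref{eq:new}, and the remaining contributions are absorbed in the diagonal (single-direction) Griffiths pairing. Your plan as written leaves this step, which is precisely where the Griffiths bullet escapes the dichotomy you impose on the Nakano bullet, unaccounted for.
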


\textbf{Remark}: In case $\mathbf E$ \textbf{is Nakano-semipositive}, Mourougane-Takayama \cite{MT08} proved that $\mathcal K^{n,q}/\mathcal I^{n,q}$ is \textbf{Nakano semipositive} (with different metric, i.e. not the quotient metric).

\begin{proof}

Recall in the \textbf{product case}, the Chern connection on $\mathcal A^{n,q}$ is defined by $\dbar_{t^j}=\partial/\partial t^j$ and $D_{t^j}=\partial/\partial t^j-\psi_j$. But now we have to use the \textbf{generalized Lie derivatives} to define the Chern connection on $A^{n,q}$, i.e.,
\begin{equation*}
D_{t^j} \mathbf u:=[\partial^E, \delta_{V_j}]\mathbf u, \ \dbar_{t^j}\mathbf u=[\dbar, \delta_{\bar V_j}]\mathbf u, \ \Theta_{j\bar k}:=[D_{t^j}, \dbar_{t^k}],
\end{equation*}
where $d^E:=\dbar+\partial^E$ denotes the Chern connection on $E$, each $V_j$ is the \textbf{horizontal lift} of $\partial/\partial t^j$ with respect to $\omega$ and $\mathbf u$ is the \textbf{representative} of a smooth section $u: t\mapsto u^t$ of $\mathcal A^{n,q}$, i.e. $\mathbf u|_{X_t}=u^t$. Since $\mathcal K^{n,q}$ is the \textbf{subbundle} of $\mathcal A^{n,q}$, we have
\begin{equation}\label{eq:theta-k-forget}
(\Theta_{j\bar k}^{\mathcal K} u, v )=(\Theta_{j\bar k}u, v)-(P^{\bot} D_{t^j} u, P^{\bot} D_{t^k} v).
\end{equation} 
By Theorem 2.5 in \cite{BPW16},  we have
\begin{equation}\label{eq:curvature-A}
(\Theta_{j\bar k}u, v)=([L_j, L_{\bar k}]u, v)-(\partial \bar V_{k}|_{X_t} \lrcorner~ u, \partial \bar V_{j}|_{X_t} \lrcorner~ v)+ (\dbar V_{j}|_{X_t} \lrcorner~ u, \dbar V_{k}|_{X_t} \lrcorner~ v).
\end{equation}
By Proposition 4.2 in \cite{Wang15}, we know that
\begin{equation}\label{eq:LJK-Lie}
 [L_j,L_{\bar k}]= [d^{E}, \delta_{[V_j,\bar V_k]}]+ \Theta(E,h)(V_j,\bar V_k).
\end{equation}
Moreover, by Lemma 6.1 in \cite{Wang15}, we know that
\begin{equation}
\delta_{[V_j,\bar V_k]} =(\partial^t c_{k\bar j}(\omega))^*-(\dbar^t c_{k\bar j}(\omega))^*.
\end{equation}
Notice that $\alpha^*=*\bar\alpha*$ if $\alpha$ is an one-form, by a direct computation, we have
\begin{eqnarray}
 \label{eq:new} ([d^{E}, \delta_{[V_j,\bar V_k]}]u, v) & = & (c_{j\bar k}(\omega) u, \dbar^t(\dbar^t)^*v) -(c_{j\bar k}(\omega) u, \partial^{E_t}(\partial^{E_t})^*v) \\
 \nonumber &  & + ~ (c_{j\bar k}(\omega)(\partial^{E_t})^*u,(\partial^{E_t})^*v  ) -  (c_{j\bar k}(\omega)(\dbar^t)^*u,(\dbar^t)^*v  ) \\
  \nonumber &  & + ~ (c_{j\bar k}(\omega)(\partial^{E_t})^*\partial^{E_t} u,v  ) -  (c_{j\bar k}(\omega)(\dbar^t)^*\dbar^t u, v  ) \\
  \nonumber &  & + ~ (c_{j\bar k}(\omega)\dbar^t u,\dbar^t v  ) -  (c_{j\bar k}(\omega)\partial^{E_t} u,\partial^{E_t} v  ).
\end{eqnarray}
Thus if $u$ is an $(n,q)$-form and $c_{j\bar k}(\omega) \equiv 0$ then we have
\begin{equation}\label{eq:a-positive}
(\Theta_{j\bar k}u, v)=(\Theta(E,h)(V_j,\bar V_k)u, v)+ (\dbar V_{j}|_{X_t} \lrcorner~ u, \dbar V_{k}|_{X_t} \lrcorner~ v).
\end{equation} 
Now let us \textbf{control the second fundamental form} $(P^{\bot} D_{t^j} u, P^{\bot} D_{t^k} v)$. Since each $P^{\bot} D_j u$ is the $L^2$-minimal solution of 
\begin{equation}
\dbar^t( \cdot) =\dbar^t D_{t^j} u,
\end{equation}
similar as the product case, one may also use the H\"ormander's $L^2$-theory to control the norm of $P^{\bot} D_j u$. By definition, we have 
\begin{equation}
D_{t^j}\mathbf u=[\partial^E, \delta_{V_j}] \mathbf u.
\end{equation}
Since
\begin{equation}
[\dbar, [\partial^E, \delta_{V_j}]]+[\partial^E, [\dbar, \delta_{V_j}]]+[\delta_{V_j}, [\dbar, \partial^E]]=0,
\end{equation}
we have
\begin{equation}\label{eq:abc-1}
\dbar^t D_{t^j} u=- [\partial^{E_t}, \dbar V_j|_{X_t}] u -(V_j\lrcorner~\Theta(E,h))|_{X_t} \wedge u.
\end{equation}
If each $\partial^{E_t} u_j=0$ then we know that $a:=-\sum P^{\bot} D_j u_j$ is the $L^2$-minimal solution of 
\begin{equation}
\dbar^t( \cdot) = \partial^{E_t} b+ c ,
\end{equation}
where
\begin{equation}
b:= \sum \dbar V_j|_{X_t} \lrcorner ~ u_j, \  c:=\sum (V_j\lrcorner~\Theta(E,h))|_{X_t} \wedge u_j.
\end{equation}
By H\"ormander's $L^2$-theory, $i\Theta(E_t, h^t)\wedge (\omega^t)^q\geq 0$ implies that
\begin{equation}
||a||^2\leq ||b||^2+ \lim_{\varepsilon\to 0} (Q_\varepsilon^{-1}c, c), \ Q_\varepsilon:=[i\Theta(E_t, h^t), \Lambda_{\omega^t}] +\varepsilon.
\end{equation}
Thus we have
\begin{equation}
\sum (\Theta^{\mathcal K}_{j\bar k}u_j, u_k) \geq \sum (\Theta(E,h)(V_j,\bar V_k)u_j, u_k)-\lim_{\varepsilon\to 0} (Q_\varepsilon^{-1}c, c).
\end{equation}
By Lemma 3.10 in \cite{Wang16-1}, we know that the right hand side is non-negative. Similar proof works for other parts of this theorem, please see \cite{Wang16-1} for the details. 
\end{proof}

By a similar argument, one may also prove the following: 

\begin{theorem}[Yamaguchi's theorem for a proper K\"ahler fibration]\label{th:I-curvature} Assume that $\omega$ is K\"ahler, $c_{j\bar k}(\omega)\equiv 0$ and $i\Theta(E, h)\wedge \omega^q \geq 0$. Assume further that the dimension of $H^{n,q}(E_t)$ is a constant. If $g$ is a holomorphic section of the quotient bundle $\mathcal I^{n,q+1}$, and
\begin{equation}
(D_{t^j}g) (t)=([\partial^E, \delta_{V_j}] \mathbf g )|_{X_t}\equiv 0.
\end{equation}
Then $-||g||^2_G$ is a smooth plurisubharmonic function on $B$.
\end{theorem}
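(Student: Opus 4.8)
The plan is to transcribe into the present non-product setting the two earlier arguments of this paper — the product case with non-product weight and the curvature computation behind the Generalized Berndtsson--Mourougane--Takayama theorem — replacing the flat derivatives $\partial/\partial t^j$, $\partial/\partial\bar t^j$ by the generalized Lie derivatives $D_{t^j}=[\partial^E,\delta_{V_j}]$, $\dbar_{t^j}=[\dbar,\delta_{\bar V_j}]$ defining the Chern connection on $\mathcal A^{n,q}$. First I would reduce to $\dim B=1$: plurisubharmonicity is tested along holomorphic discs, and all the hypotheses ($\omega$ K\"ahler, $c_{j\bar k}(\omega)\equiv 0$, $i\Theta(E,h)\wedge\omega^q\geq 0$, constancy of $\dim H^{n,q}(E_t)$, and $D_{t^j}g\equiv 0$) pull back to a disc. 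Since the Green norm is the quotient norm on $\mathcal I^{n,q+1}=\mathcal A^{n,q}/\mathcal K^{n,q}$, I write $a\colon t\mapsto a^t$ for the minimal lift of $g$, i.e. the $L^2$-minimal solution of $\dbar^t a^t=g^t$; then $a$ is orthogonal to $\mathcal K^{n,q}$ and $||g||^2_G(t)=||a^t||^2$. By Hamilton's theory together with the constancy of $\dim H^{n,q}(E_t)$, the fibrewise Bergman projections, and hence $a$, depend smoothly on $t$, so $||g||^2_G$ is smooth and it suffices to prove $\partial_t\partial_{\bar t}||g||^2_G\leq 0$ pointwise.

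Next I would invoke the two structural identities of Berndtsson's framework. Because $g$ is holomorphic in $\mathcal I^{n,q+1}$, one has $\partial_t\partial_{\bar t}||g||^2_G=||P^{\bot}D_t a||^2-(\Theta^{\mathcal I}_{t\bar t}g,g)_G$, where $P^{\bot}=1-P$, $P$ is the orthogonal projection onto $\mathcal K^{n,q}$, and the $(1,0)$-part of the quotient connection, transported to the orthogonal complement of $\mathcal K^{n,q}$, is $P^{\bot}D_t$; and, exactly as in the product case, the quotient-bundle curvature satisfies $(\Theta^{\mathcal I}_{t\bar t}g,g)_G=(\Theta_{t\bar t}a,a)+||P\dbar_t a||^2\geq(\Theta_{t\bar t}a,a)$, where $\Theta_{t\bar t}=[D_t,\dbar_t]$ is the curvature on $\mathcal A^{n,q}$. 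Hence $\partial_t\partial_{\bar t}||g||^2_G\leq||P^{\bot}D_t a||^2-(\Theta_{t\bar t}a,a)$.

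I would then feed in the two explicit computations already available in this section. First, formula \eqref{eq:a-positive}, valid because $a$ is an $(n,q)$-form and $c_{j\bar k}(\omega)\equiv 0$, gives $(\Theta_{t\bar t}a,a)=(\Theta(E,h)(V,\bar V)a,a)+||\dbar V|_{X_t}\lrcorner a||^2$, with $V$ the horizontal lift of $\partial/\partial t$. Second, since $P^{\bot}D_t a$ lies in the orthogonal complement of $\ker\dbar^t$ and $\dbar^t(D_t a)=\dbar^t(P^{\bot}D_t a)$, it is the $L^2$-minimal solution of $\dbar^t(\cdot)=\dbar^t(D_t a)$; using the hypothesis $D_t g\equiv 0$ in $\dbar^t(D_t a)=[\dbar^t,D_t]a+D_t(\dbar^t a)=[\dbar^t,D_t]a$, the commutator identity behind \eqref{eq:abc-1}, and $\partial^{E_t}a=0$ (automatic since $a$ is an $(n,q)$-form on an $n$-dimensional fibre), one obtains $\dbar^t(D_t a)=-\partial^{E_t}(\dbar V|_{X_t}\lrcorner a)-(V\lrcorner\Theta(E,h))|_{X_t}\wedge a$. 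H\"ormander's $L^2$-estimate, applicable thanks to $i\Theta(E_t,h^t)\wedge(\omega^t)^q\geq 0$, then gives $||P^{\bot}D_t a||^2\leq||\dbar V|_{X_t}\lrcorner a||^2+\lim_{\varepsilon\to 0}(Q_\varepsilon^{-1}c,c)$ with $c:=(V\lrcorner\Theta(E,h))|_{X_t}\wedge a$ and $Q_\varepsilon:=[i\Theta(E_t,h^t),\Lambda_{\omega^t}]+\varepsilon$, exactly as in the proof of the BMT-type theorem.

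Combining these estimates, the $||\dbar V|_{X_t}\lrcorner a||^2$ contributions cancel and everything reduces to the pointwise inequality $\lim_{\varepsilon\to 0}(Q_\varepsilon^{-1}c,c)\leq(\Theta(E,h)(V,\bar V)a,a)$ — the Bochner-type statement that the curvature error produced by H\"ormander is absorbed by the genuine curvature term. This is the linear-algebra content of Lemma 3.10 of \cite{Wang16-1}, and it is exactly here that the K\"ahler normalization $c_{j\bar k}(\omega)\equiv 0$ (which suppresses the spurious $c_{j\bar k}$-terms of \eqref{eq:new}) and the positivity $i\Theta(E,h)\wedge\omega^q\geq 0$ are used. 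Granting it, $\partial_t\partial_{\bar t}||g||^2_G\leq 0$; smoothness having been established in the first step, $-||g||^2_G$ is a smooth plurisubharmonic function on $B$. I expect this final pointwise curvature inequality — unwinding the contraction $c=(V\lrcorner\Theta)\wedge a$ paired against $Q_\varepsilon^{-1}$ and comparing with $(\Theta(V,\bar V)a,a)$ under the K\"ahler normalization, uniformly in $\varepsilon$ — to be the main obstacle; the bookkeeping with representatives and generalized Lie derivatives in the passage to \eqref{eq:abc-1} is a second delicate point, but it parallels the $\mathcal K^{n,q}$ computation already carried out.
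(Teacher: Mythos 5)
Your proposal is correct and is essentially the argument the paper intends: the paper itself only says ``by a similar argument'' after the Generalized Berndtsson--Mourougane--Takayama theorem, and your transcription --- quotient-bundle curvature formula with minimal lift $a$, the identity \eqref{eq:a-positive} for $(n,q)$-forms under $c_{j\bar k}(\omega)\equiv 0$, the commutator computation \eqref{eq:abc-1} simplified by $D_tg\equiv 0$ and $\partial^{E_t}a=0$, the H\"ormander bound on $P^{\bot}D_ta$, and the final absorption via Lemma 3.10 of \cite{Wang16-1} --- is exactly that similar argument. You also correctly identify the two genuinely delicate points (the pointwise curvature inequality behind Lemma 3.10 and the bookkeeping with representatives), which the paper likewise defers to \cite{Wang16-1}.
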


\section{Twisted version of Griffiths' theorem}

We will give a short account of a recent joint work with Berndtsson and Paun. We shall show how to look at it by using the relative $\dbar$-complex. Let $\pi: \mathcal X\to B$ be a proper fibration, let $E$ be a holomorphic vector bundle over the total space $\mathcal X$. By Theorem 2.3 in \cite{BPW16} or \cite{Wang16-1}, if $\dim H^{p,q}(E_t)$ does not depend on $t$ then one may look at $$\mathcal H^{p,q}:=R^{q} \pi_*\mathcal O(\wedge^{p} T^*_{\mathcal X/B} \otimes E),$$ as the holomorphic quotient bundle $\mathcal K^{p,q}/\mathcal I^{p,q}$. Notice that the quotient norm of a class $[u^t]$ in $\mathcal K_t^{p,q}/\mathcal I_t^{p,q}$ is equal to
\begin{equation}
||[u^t]||:=\inf \{||u^t+v^t||: v^t\in \mathcal I_t^{p,q}\}.
\end{equation} 
By the Hodge theory, we know that 
\begin{equation}
\inf \{||u^t+v^t||: v^t\in \mathcal I_t^{p,q}\}=||\mathbb H u^t ||,
\end{equation} 
where $\mathbb H u^t$ denotes the $\dbar^t$-harmonic part of $u^t$. We shall also write
\begin{equation}
\mathbb H^{\bot} u^t:= u^t-\mathbb H u^t.
\end{equation} 
Let us denote by $\Theta^{\mathcal H}_{j\bar k}$ the curvature operators on $\mathcal H^{p,q}$. By the curvature formula for the quotient bundle, we have:

\begin{theorem}\label{th:curvature-h} Assume that the total space $\mathcal X$ is K\"ahler. Assume further that $\dim H^{p,q}(E_t)$ does not depend on $t$. Then we have
\begin{equation}\label{eq:curvature-h}
(\Theta^{\mathcal H}_{j\bar k}[u], [v])=(\Theta^{\mathcal K}_{j\bar k}\mathbb H u, \mathbb H v)+ \left(\mathbb H^{\bot} (\dbar_{t^k}\mathbb Hu),\mathbb H^{\bot} (\dbar_{t^j}\mathbb Hv) \right),
\end{equation}
where $[u],[v]$ are smooth sections of $\mathcal H^{p,q}$.
\end{theorem}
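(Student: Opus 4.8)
The plan is to compute the curvature of the quotient bundle $\mathcal H^{p,q} = \mathcal K^{p,q}/\mathcal I^{p,q}$ by combining two facts: first, the general curvature formula for a quotient of a bundle equipped with the quotient metric, and second, the identification of the fibrewise quotient norm with the $\dbar^t$-harmonic representative via Hodge theory, as recalled just above the statement. Concretely, for a holomorphic quotient $\mathcal Q = \mathcal S/\mathcal R$ (or, as here, a quotient obtained by forgetting a subbundle structure inside the ambient $\mathcal A^{p,q}$), the curvature of $\mathcal Q$ with the quotient metric gains a term from the second fundamental form of the kernel sub-object. I would therefore start from the curvature formula for $\mathcal K^{p,q}$ as a subbundle of $\mathcal A^{p,q}$, already used in the proof of the generalized Berndtsson–Mourougane–Takayama theorem (the analogue of \eqref{eq:theta-k-forget}), and then pass to the further quotient $\mathcal K^{p,q}/\mathcal I^{p,q}$.

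The key steps, in order, would be: (1) Represent a smooth section $[u]$ of $\mathcal H^{p,q}$ by choosing the harmonic representative $\mathbb H u^t$ in each fibre; this is a smooth section of $\mathcal K^{p,q}$ (smoothness of $t \mapsto \mathbb H u^t$ follows from Hamilton's/Hodge theory since $\dim H^{p,q}(E_t)$ is constant), and it lies in the orthogonal complement of $\mathcal I^{p,q}$ inside $\mathcal K^{p,q}$. (2) Apply the generic "quotient loses curvature by the adjoint of the second fundamental form" principle: if $\mathcal H^{p,q} = \mathcal K^{p,q}/\mathcal I^{p,q}$ with quotient metric, then
\begin{equation*}
(\Theta^{\mathcal H}_{j\bar k}[u],[v]) = (\Theta^{\mathcal K}_{j\bar k}\mathbb H u, \mathbb H v) + (\sigma_j \mathbb H u, \sigma_k \mathbb H v),
\end{equation*}
where $\sigma_j$ denotes the relevant second fundamental form of $\mathcal I^{p,q} \subset \mathcal K^{p,q}$ applied in the $t^j$-direction. (3) Identify $\sigma_j \mathbb H u$ explicitly. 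Since $\mathcal I^{p,q} = \operatorname{Im}\dbar^t$ and we are differentiating the harmonic section, the (0,1)-part of the connection on $\mathcal K^{p,q}$ applied to $\mathbb H u$, namely $\dbar_{t^j}\mathbb H u$, need no longer be fibrewise harmonic; its non-harmonic part $\mathbb H^{\bot}(\dbar_{t^j}\mathbb H u)$ is precisely the piece that falls into $\mathcal I^{p,q}$ and hence is killed in the quotient, contributing the second fundamental form term. Matching indices and conjugations (the $j$ and $k$ roles are exchanged between $u$ and $v$ in the correction term, consistent with the Hermitian structure) yields \eqref{eq:curvature-h}.

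The main obstacle I expect is step (3): making rigorous the claim that the second fundamental form of $\mathcal I^{p,q}$ inside $\mathcal K^{p,q}$, evaluated on a harmonic section, equals $\mathbb H^{\bot}$ applied to the $\dbar_{t^j}$-derivative of that section. This requires carefully tracking how the Chern connection on $\mathcal A^{p,q}$ (built from the generalized Lie derivatives $D_{t^j} = [\partial^E,\delta_{V_j}]$, $\dbar_{t^j} = [\dbar,\delta_{\bar V_j}]$) restricts to $\mathcal K^{p,q}$ and then descends, and in particular verifying that the relevant orthogonal projections commute appropriately — i.e. that differentiating the harmonic projection and then projecting off the image of $\dbar^t$ is the same operation as extracting the second fundamental form. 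One must also check that the formula is independent of the choice of representative $\mathbf u$ of $[u]$, which ultimately follows because $[u]$ being a holomorphic (resp. smooth) section of $\mathcal H^{p,q}$ fixes $\mathbb H u^t$ uniquely. Once the second fundamental form is correctly identified with $\mathbb H^{\bot}\dbar_{t^j}\mathbb H u$, the remainder is the standard quotient-bundle curvature bookkeeping, and \eqref{eq:curvature-h} follows. I would then remark that \eqref{eq:curvature-h} specializes, via the earlier formulas \eqref{eq:theta-k-forget} and \eqref{eq:a-positive}, to recover the positivity statements of the preceding theorems.
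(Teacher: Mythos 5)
Your proposal is correct and follows exactly the route the paper takes: the paper derives \eqref{eq:curvature-h} by applying the general curvature formula for a quotient bundle to $\mathcal H^{p,q}=\mathcal K^{p,q}/\mathcal I^{p,q}$, identifying the orthogonal complement of $\mathcal I^{p,q}_t$ in $\mathcal K^{p,q}_t$ with the harmonic space via Hodge theory (using the constancy of $\dim H^{p,q}(E_t)$ for smoothness), so that the second fundamental form term is precisely $\mathbb H^{\bot}(\dbar_{t^k}\mathbb Hu)$. Your index bookkeeping and your identification of the correction term are consistent with \eqref{eq:theta-k-forget} and with the statement.
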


Now let us consider the following special cases:
\begin{itemize}
\item $\mathbf A$: $\omega$ is K\"ahler and $\Theta(E, h) \equiv 0$;
\item $\mathbf B$: $p+q=n$, $E$ is a line bundle and $i\Theta(E, h)=\pm \omega$.
\end{itemize}
By the Hodge theory, we know that in both cases, the space of $\dbar^t$-harmonic $E_t$-valued  $(p,q)$-forms is equal to the space of $\partial^{E_t}$-harmonic $E_t$-valued $(p,q)$-forms.
Thus 
\begin{equation}
(d^E \mathbb H u)|_{X_t} \equiv 0,
\end{equation}
which implies that
\begin{equation}
([d^E, \delta_{[V_j, \bar V_{k}]}] \mathbb H u, \mathbb H v)\equiv 0.
\end{equation}
Thus by \eqref{eq:LJK-Lie}, we have
\begin{equation}
([L_j , L_{\bar k}] \mathbb H u, \mathbb H v)\equiv (\Theta(E,h)(V_j, \bar V_k) u, v).
\end{equation}
Assume further that $B$ is one dimensional. Put
\begin{equation}
\Theta^{\mathcal H}_{t\bar t}:=\Theta^{\mathcal H}_{1\bar 1}, \
 \dbar_t:=\dbar_{t^1}, \ D_t:= D_{t^1}, \ V:=V_1, \ A:= (\Theta(E,h)(V, \bar V) u, u).
\end{equation}
By  \eqref{eq:curvature-A}, \eqref{eq:theta-k-forget} and Theorem \ref{th:curvature-h}, we have
\begin{equation}\label{eq:curvature-Final}
(\Theta^{\mathcal H}_{t\bar t}[u], [u])= A +||\dbar V|_{X_t} \lrcorner ~\mathbb Hu||^2- ||\partial \bar V|_{X_t} \lrcorner~ \mathbb Hu||^2+ ||\mathbb H^{\bot} (\dbar_{t}\mathbb Hu)||^2-  ||P^{\bot} (D_{t}\mathbb Hu)||^2
\end{equation}

We shall use the following proposition in \cite{BPW16}:

\begin{proposition}\label{pr:dbar} Assume that $\mathbf A$ or $\mathbf B$ is true. Then $P^{\bot} (D_{t}\mathbb Hu)$ is the $L^2$-minimal solution of $\dbar^t(\cdot)=-\partial^{E_t} (\dbar V|_{X_t} \lrcorner ~\mathbb Hu)$ and $\mathbb H^{\bot} (\dbar_{t}\mathbb Hu)$ is the $L^2$-minimal solution of $\partial^{E_t}(\cdot)= -\dbar^t(\partial \bar V|_{X_t} \lrcorner~ \mathbb Hu)$.
\end{proposition}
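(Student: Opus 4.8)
The plan is to verify directly that each of the two forms solves the stated equation, and then to read off minimality from the fact that $P^{\bot}$ and $\mathbb H^{\bot}$ are orthogonal projections onto the orthogonal complements of $\ker\dbar^t$ and $\ker\partial^{E_t}$ respectively. First I would record the two commutation formulas I need. One is \eqref{eq:abc-1}, which on a one-dimensional base reads
\[
\dbar^t D_t\mathbf u = -[\partial^{E_t},\dbar V|_{X_t}\lrcorner\,]\mathbf u - (V\lrcorner\Theta(E,h))|_{X_t}\wedge\mathbf u .
\]
The other is its conjugate, obtained in the same way from the graded Jacobi identity
\[
[\dbar,[\partial^E,\delta_{\bar V}]]+[\partial^E,[\dbar,\delta_{\bar V}]]+[\delta_{\bar V},[\dbar,\partial^E]]=0
\]
together with $[\dbar,\partial^E]=\Theta(E,h)$; it gives
\[
\partial^{E_t}\dbar_t\mathbf u = -[\dbar^t,\partial\bar V|_{X_t}\lrcorner\,]\mathbf u-(\bar V\lrcorner\Theta(E,h))|_{X_t}\wedge\mathbf u .
\]

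Next I would put $\mathbf u=\mathbb Hu$ and simplify using two observations. First, under $\mathbf A$ or $\mathbf B$ the $\dbar^t$- and $\partial^{E_t}$-Laplacians coincide (the K\"ahler identities, using in case $\mathbf B$ that $[i\Theta(E,h),\Lambda_{\omega^t}]$ annihilates $(p,q)$-forms with $p+q=n$), so $\mathbb Hu$ is bi-harmonic: $\dbar^t\mathbb Hu=(\dbar^t)^*\mathbb Hu=\partial^{E_t}\mathbb Hu=(\partial^{E_t})^*\mathbb Hu=0$. Hence the graded commutators collapse to $[\partial^{E_t},\dbar V|_{X_t}\lrcorner\,]\mathbb Hu=\partial^{E_t}(\dbar V|_{X_t}\lrcorner\mathbb Hu)$ and $[\dbar^t,\partial\bar V|_{X_t}\lrcorner\,]\mathbb Hu=\dbar^t(\partial\bar V|_{X_t}\lrcorner\mathbb Hu)$. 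Second, since $V$ is the $\omega$-horizontal lift it is $\omega$-orthogonal to the fibres, so $(V\lrcorner\omega)|_{X_t}=(\bar V\lrcorner\omega)|_{X_t}=0$; as $\Theta(E,h)\equiv 0$ in case $\mathbf A$ and $\Theta(E,h)=\mp i\omega$ in case $\mathbf B$, the curvature terms above vanish upon restriction to $X_t$. This yields $\dbar^t(D_t\mathbb Hu)=-\partial^{E_t}(\dbar V|_{X_t}\lrcorner\mathbb Hu)$ and $\partial^{E_t}(\dbar_t\mathbb Hu)=-\dbar^t(\partial\bar V|_{X_t}\lrcorner\mathbb Hu)$.

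It then remains to pass to the projections. Since $P$ is the orthogonal projection onto $\ker\dbar^t=\mathcal K^{n,q}_t$, we have $\dbar^tP^{\bot}(D_t\mathbb Hu)=\dbar^t(D_t\mathbb Hu)$ and $P^{\bot}(D_t\mathbb Hu)\in(\ker\dbar^t)^{\bot}$, so it is the $L^2$-minimal solution; this settles the first assertion. For the second, $\mathbb H(\dbar_t\mathbb Hu)$ is $\dbar^t$-harmonic, hence $\partial^{E_t}$-harmonic (again cases $\mathbf A$, $\mathbf B$), hence $\partial^{E_t}$-closed, so $\partial^{E_t}\mathbb H^{\bot}(\dbar_t\mathbb Hu)=\partial^{E_t}(\dbar_t\mathbb Hu)$ equals the asserted right-hand side; and minimality amounts to $\mathbb H^{\bot}(\dbar_t\mathbb Hu)\perp\ker\partial^{E_t}=\mathcal H^{p,q+1}\oplus\mathrm{Im}\,\partial^{E_t}$. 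Orthogonality to $\mathcal H^{p,q+1}$ is immediate since the two harmonic spaces coincide, and orthogonality to $\mathrm{Im}\,\partial^{E_t}$ is exactly $(\partial^{E_t})^*(\dbar_t\mathbb Hu)=0$, which I would derive from the K\"ahler identity $(\partial^{E_t})^*=i[\Lambda_{\omega^t},\dbar^t]$ together with the vanishing $\dbar^t\dbar_t\mathbb Hu=0$ (a consequence of $\dbar^t\mathbb Hu\equiv0$ for all $t$ and of $\dbar^t\dbar_t+\dbar_t\dbar^t=0$, which in turn follows from $\dbar^2=0$ on $\mathcal X$) and the bi-harmonicity of $\mathbb Hu$.

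The bulk of the work, and the main obstacle, is the bookkeeping: tracking bidegrees and signs in the graded commutators, handling the restriction-to-fibre operations and the horizontal component of $\bar V|_{X_t}$ in the first two steps; and, in the last step, the clean verification of $(\partial^{E_t})^*(\dbar_t\mathbb Hu)=0$, which is precisely where the hypotheses $\mathbf A$ / $\mathbf B$ --- equivalently, the coincidence of the $\dbar^t$- and $\partial^{E_t}$-Hodge decompositions --- are genuinely needed. Everything else is formal.
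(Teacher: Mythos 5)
The paper does not actually prove this proposition --- it is quoted from \cite{BPW16} --- so there is no in-text argument to compare yours with; I can only assess your sketch on its own terms. Your treatment of the first assertion is sound: \eqref{eq:abc-1} plus the bi-harmonicity of $\mathbb Hu$ (via $\square=\overline\square$ in cases $\mathbf A$, $\mathbf B$) and the vanishing of $(V\lrcorner\,\omega)|_{X_t}$ give $\dbar^t(D_t\mathbb Hu)=-\partial^{E_t}(\dbar V|_{X_t}\lrcorner\,\mathbb Hu)$, and since $P^{\bot}$ is \emph{by definition} the projection onto $(\ker\dbar^t)^{\perp}$, minimality is automatic. The same computation gives $\partial^{E_t}(\dbar_t\mathbb Hu)=-\dbar^t(\partial\bar V|_{X_t}\lrcorner\,\mathbb Hu)$, so $\mathbb H^{\bot}(\dbar_t\mathbb Hu)$ does solve the second equation.

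The gap is in the minimality of the second solution, and you have correctly located but not closed it. Here $\mathbb H^{\bot}$ is only the projection off the harmonic space, not onto $(\ker\partial^{E_t})^{\perp}$, so you genuinely need $(\partial^{E_t})^*(\dbar_t\mathbb Hu)=0$. Your proposed derivation from $(\partial^{E_t})^*=i[\Lambda_{\omega^t},\dbar^t]$ and $\dbar^t\dbar_t\mathbb Hu=0$ only kills half of the commutator: expanding,
\begin{equation*}
(\partial^{E_t})^*(\dbar_t\mathbb Hu)=i\Lambda_{\omega^t}\dbar^t(\dbar_t\mathbb Hu)-i\,\dbar^t\Lambda_{\omega^t}(\dbar_t\mathbb Hu)=-i\,\dbar^t\Lambda_{\omega^t}(\dbar_t\mathbb Hu),
\end{equation*}
and the remaining term is not addressed by anything in your sketch. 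Nor can you fall back on the soft fact that $\mathbb H^{\bot}(\dbar_t\mathbb Hu)\in\mathrm{Im}\,\dbar^t$: on a compact K\"ahler manifold $\mathrm{Im}\,\dbar^t$ is \emph{not} orthogonal to $\ker\partial^{E_t}$ (e.g.\ $\dbar\partial\phi\in\mathrm{Im}\,\dbar\cap\mathrm{Im}\,\partial$ is generally not orthogonal to $\mathrm{Im}\,\partial$), so membership in $\mathrm{Im}\,\dbar^t$ together with orthogonality to harmonics does not yield orthogonality to $\mathcal H\oplus\mathrm{Im}\,\partial^{E_t}$. This is precisely the nontrivial content of the second half of the proposition, and it needs a genuine additional argument (controlling $\dbar^t\Lambda_{\omega^t}(\dbar_t\mathbb Hu)$, or an identity special to $\dbar_t\mathbb Hu$ coming from the horizontal lift), not just the formal bookkeeping you describe as "the bulk of the work." Also a small slip: the solution of $\partial^{E_t}(\cdot)=-\dbar^t(\partial\bar V|_{X_t}\lrcorner\,\mathbb Hu)$ lives in bidegree $(p,q)$, so the kernel you must be orthogonal to is $\ker\partial^{E_t}$ on $(p,q)$-forms, not on $(p,q+1)$-forms.
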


Let us denote by $\square$ the $\dbar^t$-Laplace and denote by $\overline{\square}$ the $\partial^{E_t}$-Laplace. By the Bochner-Kodaira-Nakano formula, in case $\mathbf A$, we have $\square \alpha=\overline{\square} \alpha$,  in case  $\mathbf B$, we have $\square \alpha=\overline{\square}\alpha \pm \alpha$ if $i\Theta(E, h)=\pm \omega$. Thus \eqref{eq:curvature-Final}, Proposition \ref{pr:dbar} and page 15 in \cite{Bern11} together imply the following twisted version of Griffiths' theorem in \cite{BPW16}:

\begin{theorem}[Twisted version of Griffiths' theorem]\label{th:direct-image} Assume that $\omega$ is K\"ahler and $\dim H^{p,q}(E_t)$ does not depend on $t$. Assume further that $B$ is one dimenional. If $\Theta(E,h)\equiv 0$ then we have the following Griffiths formula:
\begin{equation}
(\Theta^{\mathcal H}_{t\bar t} [u], [u])= ||\mathbb H(\dbar V|_{X_t} \lrcorner ~\mathbb Hu)||^2-  ||\mathbb H(\partial \bar V|_{X_t} \lrcorner ~\mathbb Hu)||^2.
\end{equation}
If $p+q=n$, $E$ is a line bundle and $i\Theta(E, h)=\omega$ then
\begin{equation}
(\Theta^{\mathcal H}_{t\bar t} [u], [u]) \geq  (|V|^2_{\omega} \mathbb H u, \mathbb H u)+||\mathbb H(\dbar V|_{X_t} \lrcorner ~\mathbb Hu)||^2-  ||\mathbb H(\partial \bar V|_{X_t} \lrcorner ~\mathbb Hu)||^2.
\end{equation}
If $p+q=n$, $E$ is a line bundle and $i\Theta(E, h)=-\omega$  then
\begin{equation}
(\Theta^{\mathcal H}_{t\bar t} [u], [u]) \leq - (|V|^2_{\omega} \mathbb H u,\mathbb H u)+||\mathbb H(\dbar V|_{X_t} \lrcorner ~\mathbb Hu)||^2-  ||\mathbb H(\partial \bar V|_{X_t} \lrcorner ~\mathbb Hu)||^2.
\end{equation}
\end{theorem}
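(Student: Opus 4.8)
\medskip
\noindent\textbf{Proof strategy.} The formula \eqref{eq:curvature-Final} --- itself obtained by combining \eqref{eq:curvature-A}, \eqref{eq:theta-k-forget} and Theorem \ref{th:curvature-h} --- is already available, so the only work left is to rewrite its right-hand side in each of the two cases. The plan is to carry this out in three steps. The first step is to evaluate the curvature term $A=(\Theta(E,h)(V,\bar V)\,\mathbb H u,\mathbb H u)$. In case $\mathbf A$ we have $\Theta(E,h)\equiv 0$, so $A=0$. In case $\mathbf B$, $E$ is a line bundle and $i\Theta(E,h)=\pm\omega$, hence $\Theta(E,h)(V,\bar V)$ is the real scalar function $\pm|V|^2_\omega$ and $A=\pm(|V|^2_\omega\,\mathbb H u,\mathbb H u)$; this is precisely the term $(|V|^2_\omega\,\mathbb H u,\mathbb H u)$ appearing in the statement, carrying the sign that makes the conclusion a lower bound when $i\Theta(E,h)=\omega$ and an upper bound when $i\Theta(E,h)=-\omega$.

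The second and main step is to deal with the remaining four terms of \eqref{eq:curvature-Final}. First split orthogonally $\|\dbar V|_{X_t}\lrcorner\mathbb H u\|^2=\|\mathbb H(\dbar V|_{X_t}\lrcorner\mathbb H u)\|^2+\|\mathbb H^{\bot}(\dbar V|_{X_t}\lrcorner\mathbb H u)\|^2$, and likewise for $\partial\bar V|_{X_t}\lrcorner\mathbb H u$; the two harmonic pieces are exactly the terms meant to survive into the final formula. What then remains to control is
\[
\|\mathbb H^{\bot}(\dbar V|_{X_t}\lrcorner\mathbb H u)\|^2-\|P^{\bot}(D_t\mathbb H u)\|^2 \quad\text{and}\quad \|\mathbb H^{\bot}(\dbar_t\mathbb H u)\|^2-\|\mathbb H^{\bot}(\partial\bar V|_{X_t}\lrcorner\mathbb H u)\|^2 .
\]
By Proposition \ref{pr:dbar}, $P^{\bot}(D_t\mathbb H u)$ is the $L^2$-minimal $\dbar^t$-solution of $-\partial^{E_t}(\dbar V|_{X_t}\lrcorner\mathbb H u)=-\partial^{E_t}\mathbb H^{\bot}(\dbar V|_{X_t}\lrcorner\mathbb H u)$ and $\mathbb H^{\bot}(\dbar_t\mathbb H u)$ is the $L^2$-minimal $\partial^{E_t}$-solution of $-\dbar^t\mathbb H^{\bot}(\partial\bar V|_{X_t}\lrcorner\mathbb H u)$, so, via the $\dbar^t$- and $\partial^{E_t}$-Green operators, both quantities become explicit quadratic forms in $\mathbb H^{\bot}(\dbar V|_{X_t}\lrcorner\mathbb H u)$ and $\mathbb H^{\bot}(\partial\bar V|_{X_t}\lrcorner\mathbb H u)$ respectively. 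This is exactly the Hodge-theoretic bookkeeping done on page 15 of \cite{Bern11}: since $\omega$ is K\"ahler, the Bochner--Kodaira--Nakano identity gives $\square=\overline{\square}$ on the forms in question in case $\mathbf A$, so the two Green operators coincide, and the fiberwise closedness $(d^E\mathbb H u)|_{X_t}\equiv 0$ --- valid in both cases because there the $\dbar^t$-harmonic forms coincide with the $\partial^{E_t}$-harmonic ones --- links $\partial^{E_t}(\dbar V|_{X_t}\lrcorner\mathbb H u)$ with $\dbar^t(\partial\bar V|_{X_t}\lrcorner\mathbb H u)$ and forces the difference of the two displayed expressions to vanish. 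In case $\mathbf B$ the Green operators instead act on forms of one degree higher, where $\square$ and $\overline{\square}$ differ by $\pm\mathrm{Id}$; the difference no longer vanishes, but the shift moves $-\|P^{\bot}(D_t\mathbb H u)\|^2$ and $+\|\mathbb H^{\bot}(\dbar_t\mathbb H u)\|^2$ in the directions compatible with the asserted inequality.

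Finally, plugging the outcomes of the first two steps back into \eqref{eq:curvature-Final}: in case $\mathbf A$ everything but the two harmonic pieces cancels, giving the Griffiths identity $(\Theta^{\mathcal H}_{t\bar t}[u],[u])=\|\mathbb H(\dbar V|_{X_t}\lrcorner\mathbb H u)\|^2-\|\mathbb H(\partial\bar V|_{X_t}\lrcorner\mathbb H u)\|^2$; in case $\mathbf B$ one obtains this same expression together with $A=\pm(|V|^2_\omega\,\mathbb H u,\mathbb H u)$ and a remainder of the correct sign, which are the two displayed inequalities. I expect the hard part to be the second step: making the computation of page 15 of \cite{Bern11} precise in the present setting --- verifying that the $\dbar^t$- and $\partial^{E_t}$-Green-operator remainders genuinely cancel in case $\mathbf A$ and that the $\pm\mathrm{Id}$ shift in case $\mathbf B$ has the claimed sign effect. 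The two delicate ingredients are the commutation of the fiberwise Laplacian with $\dbar^t$ and $\partial^{E_t}$ (the K\"ahler identity, with the degree-shift correction in case $\mathbf B$) and the precise consequence of $(d^E\mathbb H u)|_{X_t}\equiv 0$; it is exactly here that the hypotheses "$\omega$ K\"ahler", "$\dim H^{p,q}(E_t)$ constant" and the case assumption $\mathbf A$ or $\mathbf B$ all come into play.
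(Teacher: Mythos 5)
Your proposal follows exactly the route the paper takes: the paper's entire proof is the one-line observation that \eqref{eq:curvature-Final}, Proposition \ref{pr:dbar} and the Bochner--Kodaira comparison $\square=\overline{\square}$ (case $\mathbf A$) resp. $\square=\overline{\square}\pm\mathrm{Id}$ (case $\mathbf B$), carried out as on page 15 of \cite{Bern11}, yield the three formulas. Your expansion --- evaluating $A$, splitting the contraction terms into harmonic and non-harmonic parts, and comparing the two Green operators on the degree-$(n+1)$ forms via the curvature shift --- is correct and in fact supplies more detail than the paper itself, which defers the Green-operator bookkeeping entirely to \cite{Bern11} and \cite{BPW16}.
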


\end{document}